\newtheorem{thm}{Theorem}[section]
\newtheorem{theorem}[thm]{Theorem}
\newtheorem{proposition}[thm]{Proposition}
\newtheorem{corollary}[thm]{Corollary}
\newtheorem{lemma}[thm]{Lemma}
\theoremstyle{definition}
\newtheorem{definition}[thm]{Definition}
\theoremstyle{remark}
\newtheorem{remark}[thm]{Remark}
\newcommand{\R}{{\mathbb{R}}}
\newcommand{\sk}{\operatorname{Sk}}
\newcommand{\bT}{\mathbf{T}}
\begin{document}
\thanks{TE is supported by the Knut and Alice Wallenberg Foundation and the Swedish Research Council. \\ \indent VS is partially supported by the NSF grant CAREER DMS-1654545.}
\title[colored HOMFLYPT counts holomorphic curves]{colored HOMFLYPT counts holomorphic curves} 
\author{Tobias Ekholm}
\author{Vivek Shende}

\maketitle

\begin{abstract}
We compute the contribution of all multiple covers of an isolated rigid embedded holomorphic annulus, stretching between Lagrangians, to the skein-valued count of open holomorphic curves in a Calabi-Yau 3-fold.
The result agrees with the predictions from topological string theory and we use it to prove the 
Ooguri-Vafa formula \cite{OV} that identifies 
the colored HOMFLYPT invariants of a link with a count of holomorphic curves ending on 
the conormal Lagrangian of the link in the resolved conifold. This generalizes our previous work \cite{SOB} which proved the result for the fundamental color. 
\end{abstract}

\vspace{4mm}


%
%


\section{Introduction}

\thispagestyle{empty}

In \cite{OV}, extending the ideas of \cite{Witten, GV-geometry}, Ooguri and Vafa made a remarkable prediction: the colored
HOMFLYPT invariant of a link in the 3-sphere is the count of all holomorphic curves in the resolved conifold with boundary on the  shifted Lagrangian conormal of the link.  
This prediction both implied  novel structural features of the knot invariants, 
and opened the door to counting  holomorphic curves by manipulating knot invariants 
\cite{Aganagic-Vafa, Aganagic-Marino-Vafa, AKMV}; i.e., counting solutions to the Cauchy-Riemann equation -- a nonlinear PDE -- by solving a problem in combinatorial representation theory.  

Much mathematical evidence supports this picture, 
e.g. \cite{Liu, Katz-Liu, OP, LLLZ,  OS, DSV, M}.  
However, the main ingredient   -- a deformation invariant
open Gromov-Witten theory -- was missing.  Indeed, deformation invariance of Gromov-Witten curve counts amounts, naively, to the following assertion: for a generic 1-parameter family of data, the boundary of the parameterized solution space 
sits entirely above the boundary of the family.  This is well-known to be false for moduli of curves with boundary, which have additional codimension one boundary components
associated to hyperbolic and elliptic boundary bubbling. 

In \cite{SOB} we showed: at these extra 
boundaries
in moduli
the boundaries of the curves themselves look exactly like the terms in the framed skein relation (Figure \ref{skein}). 
Recall that the framed skein module $\sk(L)$ is
the free module generated by framed links in the (three-dimensional) Lagrangian $L$ modulo the skein relations. 
If we count curves by the isotopy class of their boundary in $\sk(L)$, then the result is invariant \cite{SOB}. In the same
article, we also proved the Ooguri-Vafa formula in the simplest case, showing that the (uncolored) HOMFLYPT counts curves in the simplest homology class.

The full Ooguri-Vafa prediction amounts to one identity
per integer partition.  
Here, the integer partitions index
on the one hand the possible windings of the boundary of the holomorphic curve around the longitude in conormal Lagrangian
(a solid torus), and on the other hand the possible `colors' of the colored HOMFLYPT polynomial.  
In this language, the case treated in \cite{SOB} corresponds to the partition ``1=1''. Here we will treat general partitions.  

To explain what this requires,
let us sketch the argument from \cite{SOB}. 
Given a link $K \subset S^3$, one forms the conormal $L_{K} \subset T^* S^3$ and shifts it off the zero section.  For an 
appropriate complex structure, any non-constant holomorphic curve in $T^{\ast}S^{3}$ with boundary on $S^{3}\cup L_{K}$ is a cover of the holomorphic annuli with boundaries tracing the path of the original link as 
the conormal is shifted off.  Focus attention on the lowest degree term, i.e., the embedded annulus.  There is exactly 
one per component of the link.  Counting in the skein, 
this gives the class $\langle K \rangle \subset \sk(S^3)$, tensored with some longitude factors in the skeins of the 
conormal Lagrangians recording the fact that the map went only once around each component of the link.  Perform the conifold transition by first stretching around the zero section of $T^*S^3$. 
 Because $S^* S^3$ has no index zero Reeb orbits, in the stretching all holomorphic curves must end up 
in $T^{\ast}S^{3}\setminus S^{3}$ for sufficiently stretched complex structures. (These may later be identified with curves
in the conifold.)
We count curves by their boundaries, so these 
are counted in the skein as 
some $C_K \cdot \langle \emptyset \rangle \in \sk(S^3)$, where $C_K$ just counts the curves which wind once around each 
component of $K$.    
By invariance, $C_K \cdot \langle \emptyset \rangle = \langle K \rangle$, so $C_K$ is nothing other than the HOMFLYPT polynomial
of $K$.  On the other hand, the curves counted by $C_K$ persist to the conifold, so the count is valid there as well.

\begin{wrapfigure}{r}{\dimexpr 6cm + 2\FrameSep + 2\FrameRule\relax}
\begin{center}
\begin{framed}\raggedleft
\includegraphics[scale=0.2]{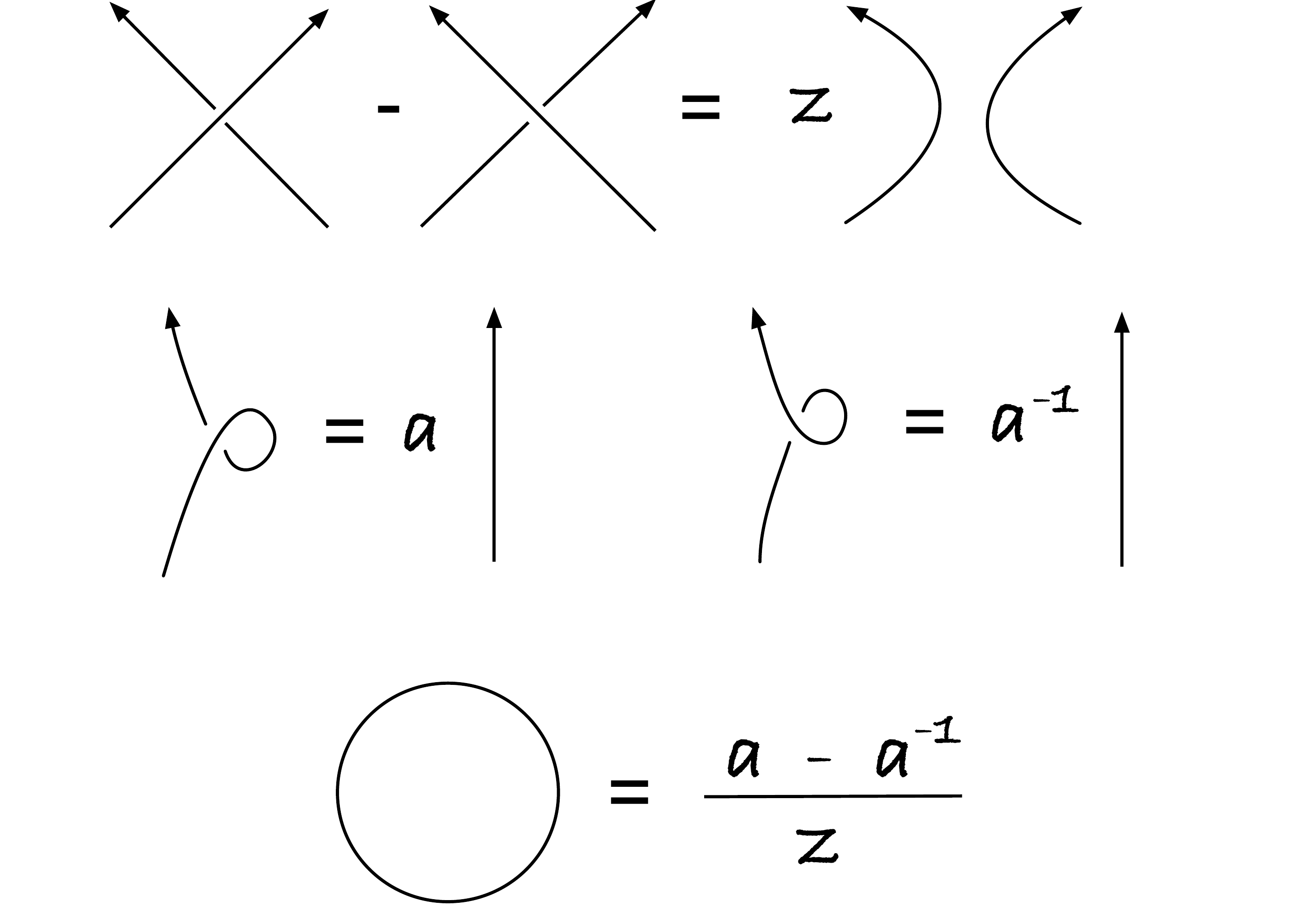}
\caption{\label{skein} The HOMFLYPT skein relations.  Here we will take $z = q^{1/2} - q^{-1/2}$. } 
\end{framed}
\end{center}
\end{wrapfigure}

To treat the general case, we need to solve two problems, one foundational and one calculational.  The foundational problem 
is the usual difficulty of achieving transversality in settings where multiple
covers may appear (which we avoided in \cite{SOB} by considering only the curves going once around), complicated by
our need to {\em not} perturb constant curves.  This is dealt with 
in \cite{bare}.   Here we treat the calculational question: how does the multiply covered holomorphic annulus contribute to the skein?  

Let us first recall how the answer is predicted by \cite{OV}.  The boundary of the worldsheet of a string in a Lagrangian brane  
introduces a line defect into the Chern-Simons theory on the brane.  When the string worldsheet is
embedded it can
be shown to contribute the simplest Wilson line: trace of the holonomy in the fundamental representation \cite[Section 4.4]{Witten}.  
Thus, going $n$ times around should be the trace of the $n^{\rm th}$ power of the holonomy. In the case under consideration we have $n$-fold covers of annuli which come with a $1/n$ automorphism factor.  
Now, exponentiating the sum of these contributions 
(and using a standard identity from symmetric function combinatorics) one finds that the total 
contribution of all disconnected curves should be $\sum_\lambda W_\lambda \otimes W_\lambda$, where $W_\lambda$ 
denotes the Wilson line given by taking trace in the representation corresponding to the partition $\lambda$. 

For various reasons 
it is hard to make direct mathematical use of the above argument:
neither Chern-Simons theory nor topological string theory have proper mathematical foundations,
which makes it hard to interpret the statement that worldsheets in the latter contribute Wilson lines to 
the former.  
Instead in \cite{SOB} we turned the correspondence between embedded curves and 
simple Wilson lines into a definition -- count holomorphic curves by 
their boundaries in the skein -- which we showed was consistent (in full generality, this requires \cite{bare}).  

This definition still does not allow for a direct import of the argument from  \cite{OV}: we 
demand that all curves have embedded boundary, 
which means we can not deal directly with the multiply covered annulus. 
Furthermore, the skein relation requires us to count all disconnected curves, which means we can only see the exponentiated version of the count discussed above. Nevertheless, the final answer given by the Ooguri-Vafa argument is intelligible in our treatment and we can try and verify it.  

Our version of $W_\lambda$ is the element in the skein of the solid torus with the following property: given a framed knot $K$, 
the ordinary HOMFLYPT polynomial of the cable $W_\lambda(K)$ is the same as the $\lambda$-colored HOMFLYPT
polynomial of $K$.  These elements have a skein theoretic characterization as follows. 
The skein of the solid torus has an endomorphism $P_{1,0}$ given by encircling by a meridional loop. 
The $W_\lambda$ are characterized up to scalar multiple 
as the eigenvectors of this endomorphism with eigenvalues 
\cite{Lukac}
\begin{equation}
\label{eigenvalue}
P_{1, 0} W_\lambda  =   (\bigcirc + a (q^{1/2} - q^{-1/2}) c_\lambda(q))  \cdot W_\lambda \\
\end{equation}
Here $\bigcirc$ means an unknot (the boundary of an embedded disk with standard framing), or in other words simply the scalar 
$(a-a^{-1})(q^{1/2} - q^{-1/2})^{-1}$.  The quantity $c_\lambda(q)$ is the `content polynomial' of $\lambda$; 
its value is irrelevant to us here save only for the fact that it determines $\lambda$. 
The scalar multiple is fixed by the quantum dimension formula of \cite{Reshetikhin}, which we recall below
in Equation (\ref{dimension}). 

The $W_\lambda$ span the part of the skein generated by links winding
only positively along the longitude.  Our links will always be positive, being small perturbations 
of positive multiple covers of the longitude.  

We prove here the following version of the Ooguri-Vafa local calculation: 

\begin{theorem} \label{maintheorem}
Let $C$ be a totally isolated rigid holomorphic annulus, with boundaries $K_1, K_2$.  
Then the total contribution of $C$ to the skein valued curve count is 
$$\sum_\lambda \gamma^{|\lambda|} W_\lambda(K_1) \otimes W_\lambda(K_2)$$
This is a sum over all integer partitions $\lambda$, where $|\lambda|$ is the sum of parts of $\lambda$,  and $\gamma$ is some signed
monomial in the framing variables.  
\end{theorem}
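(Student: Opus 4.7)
The plan is to identify the element $Z_C \in \sk(L_1) \otimes \sk(L_2)$ — the total skein-valued contribution of all multiple covers of $C$, where $L_i$ is a solid torus neighborhood of $K_i$ — and pin it down by an eigenvector argument. Since $C$ is totally isolated and rigid, the only non-constant contributors to $Z_C$ are multiple covers of $C$ itself, and via the bare curves formalism of \cite{bare} each such cover contributes a well-defined skein element supported on parallel copies of $K_1$ and $K_2$. In particular $Z_C$ lies in the submodule of $\sk(L_1) \otimes \sk(L_2)$ generated by links that wind only positively along the longitudes; this submodule is freely spanned by $\{W_\lambda(K_1) \otimes W_\mu(K_2)\}$, so one can write $Z_C = \sum_{\lambda, \mu} c_{\lambda, \mu}\, W_\lambda(K_1) \otimes W_\mu(K_2)$.

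To force $c_{\lambda, \mu} = 0$ for $\lambda \neq \mu$, I would exploit the eigenvalue characterization \eqref{eigenvalue}. Geometrically, a small meridian of $K_1$ inserted inside $L_1$ can be swept along $C$ to become a meridian of $K_2$, and the skein invariance of the curve count should promote this isotopy to the identity
\begin{equation*}
(P_{1,0} \otimes \id)\, Z_C \;=\; (\id \otimes P_{1,0})\, Z_C.
\end{equation*}
Substituting the expansion and applying \eqref{eigenvalue} on both sides, the $\bigcirc$-terms cancel, leaving $c_{\lambda,\mu}\,(c_\lambda(q) - c_\mu(q)) = 0$. Since the content polynomial separates partitions, this forces $c_{\lambda,\mu} = 0$ unless $\lambda = \mu$.

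What remains is to compute the scalars $c_{\lambda,\lambda}$. I would reduce to a universal local model of a rigid embedded annulus and combine three inputs: (i) Reshetikhin's quantum dimension formula fixing the overall normalization of $W_\lambda$, (ii) the $|\lambda|=1$ case already established in \cite{SOB}, and (iii) the fact that each box added to $\lambda$ corresponds to one extra longitudinal winding of some multiple cover of $C$, hence one extra unit of the framing the annulus carries relative to the ambient Lagrangian. These inputs conspire to produce the multiplicative answer $c_{\lambda,\lambda} = \gamma^{|\lambda|}$ for a signed framing monomial $\gamma$.

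The main obstacle will be the meridian-sliding identity of Step~2. Dragging a meridian across $C$ generically produces boundary bubbling and creation/annihilation of multiply covered strata along the sweep, and these wall-crossings must be systematically resolved using the framed skein relation of Figure~\ref{skein} without double-counting covers already assembled into $Z_C$. Crucially, the constant curves along the sweep cannot be perturbed away, so the compactness and gluing analysis of \cite{bare} is essential just to make the identity meaningful before its algebraic consequences can be extracted.
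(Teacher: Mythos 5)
Your overall strategy---reduce to a universal element of $\sk(\bT)\otimes\sk(\bT)$, expand in the basis $W_\lambda\otimes W_\mu$, prove diagonality by an eigenvector argument for $P_{1,0}$, then fix the diagonal coefficients---matches the paper's. But two of your steps have genuine gaps. The meridian-sliding identity $(P_{1,0}\otimes\id)\,Z_C=(\id\otimes P_{1,0})\,Z_C$ is false, not merely hard to prove: by Equation~(\ref{eigenvalue}), $P_{1,0}W_\lambda=(\bigcirc+a(q^{1/2}-q^{-1/2})c_\lambda(q))W_\lambda$ with $\bigcirc=(a-a^{-1})(q^{1/2}-q^{-1/2})^{-1}$ evaluated in the skein of the relevant solid torus, and the two boundary tori carry independent framing variables $a_1\neq a_2$. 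So your identity already fails on the expected answer at $\lambda=\emptyset$ (the $\bigcirc$-terms do not cancel, contrary to your claim), and imposing it would force $Z_C=0$. The correct relation is $\bigl((P_{1,0}-\bigcirc)\otimes a_2-a_1\otimes(P_{1,0}-\bigcirc)\bigr)\Psi=0$, and the paper derives it not by dragging a meridian through the annulus but from the boundaries of the one-dimensional moduli spaces of curves with one positive puncture at the two self Reeb chords of the Legendrian Hopf link at infinity of the model $T^*(S^1\times\R^2)$: the operators $P_{1,0}$ and $\bigcirc$ arise as boundaries of the rigid disks $D_i,D_i'$ in the symplectization, and there are additional two-negative-puncture disks $T_1,T_2$ whose (nontrivial) contributions only disappear because they cancel between the two chord equations via $\partial T_1\sim\partial T_2$. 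Your sweep has no mechanism producing the $a_i$ weights or handling these correction terms.

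Second, the determination of $c_{\lambda,\lambda}$ is asserted rather than proved. Diagonality, the $|\lambda|=1$ case, and ``one extra unit of framing per box'' do not determine the coefficients: already at $|\lambda|=2$ nothing you say distinguishes $c_{(2),(2)}$ from $c_{(1,1),(1,1)}$, nor forces either to equal $\gamma^2$. The paper pins these down by transplanting the universal $\Psi$ into $T^*S^3$ with the unknot conormal, comparing against the full unknot computation of \cite{unknot} through the quantum dimension formula (\ref{dimension}), and then closing off either Lagrangian into a sphere to exclude the conjugate-partition alternative $n_\lambda=\gamma^{|\lambda|}\langle W_{\lambda'}\rangle_{S^3}/\langle W_\lambda\rangle_{S^3}$. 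That external input is essential and cannot be replaced by the heuristic you give.
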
 

The notion of totally isolated rigid holomorphic annulus (see Definition \ref{standard annuli} below) 
is a  
condition ensuring that the holomorphic annulus is modeled on the kind we
discuss above, and moreover that we may consider its perturbations independently of perturbations needed for
other curves.  The condition is certainly satisfied in the Ooguri-Vafa situation, when
the annulus is the only holomorphic curve around. 

Let us sketch the proof of Theorem \ref{maintheorem}.  By locality of the perturbation scheme
we may work in some appropriate local model.  Perhaps the simplest imaginable is the cotangent
bundle of the solid torus, where we take the zero section and a shifted off conormal of the longitude.  
Studying boundaries of one-dimensional moduli
of curves with one positive puncture as in \cite{AENV, Eicm, ENg, unknot}, we find a relation: 
$((P_{1,0} - \bigcirc) \otimes a_2 - a_1 \otimes (P_{1,0}- \bigcirc)) \Psi = 0$, 
where $\Psi$ is the total contribution of all (perturbed) multiple covers of the  holomorphic annulus.   
Here $a_1, a_2$ are the `$a$' elements of the skeins of
the two solid tori.  From the formula (\ref{eigenvalue}), it follows immediately that $\Psi$ is itself `diagonal' in the eigenbasis, 
i.e., a sum of $W_\lambda \otimes W_\lambda$ with some coefficients.  To determine the coefficients
it suffices to study the unknot, which we did in \cite{unknot}. 

Combining Theorem \ref{maintheorem} with the results of \cite{SOB, bare}, we deduce

\begin{theorem} \label{homflypt counts} 
Let $K \subset S^3$ be a link, and $L_K \subset X$ the associated Lagrangian in the resolved
conifold.  Let $P_{K, \lambda}(a, q)$ be the $\lambda$-colored HOMFLYPT polynomial of $K$. 
Let $Z \in \sk(L_K)((Q^{1/2}))$ be the skein-valued open Gromov-Witten invariant.  For appropriate
choice of 4-chain,
$$Z|_{z=q^{1/2} - q^{-1/2}} = \sum_\lambda P_{K, \lambda}(Q^{1/2}, q) \cdot W_\lambda $$
\end{theorem}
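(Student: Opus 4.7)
The plan is to evaluate $Z$ first in $T^*S^3$ by applying Theorem \ref{maintheorem} to the embedded annuli there, and then transport the answer to the resolved conifold $X$ via neck stretching, combining with the invariance and transversality machinery of \cite{SOB, bare}.

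First I would set up shop in $T^*S^3$ with zero section $S^3$ and shifted conormal $L_K$. As recalled from \cite{SOB}, for an appropriate almost complex structure there is exactly one embedded holomorphic annulus per component $K_i$ of $K$, connecting $K_i \subset S^3$ to its shift in $L_K$, and every non-constant holomorphic curve is a multiple cover of one of these annuli. Each annulus is totally isolated and rigid in the sense of Definition \ref{standard annuli}, and the annuli sit in disjoint neighborhoods, so the full disconnected skein-valued count factors as a product of the local contributions computed by Theorem \ref{maintheorem}:
\[
Z_{T^*S^3} \;=\; \prod_{i=1}^r \sum_{\lambda_i} \gamma^{|\lambda_i|}\, W_{\lambda_i}(K_i) \otimes W_{\lambda_i} \;\in\; \sk(S^3)\otimes \sk(L_K).
\]
Reindexing by tuples $\lambda = (\lambda_1,\ldots,\lambda_r)$ with $|\lambda|=\sum_i|\lambda_i|$ and $W_\lambda = \bigotimes_i W_{\lambda_i}$, and using that the class in $\sk(S^3)$ of the cable $\bigsqcup_i W_{\lambda_i}(K_i)$ is by definition the colored HOMFLYPT polynomial $P_{K,\lambda}(a,q)$, this rearranges to
\[
Z_{T^*S^3} \;=\; \sum_\lambda \gamma^{|\lambda|}\, P_{K,\lambda}(a,q)\cdot W_\lambda.
\]

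Next I would perform the conifold transition by stretching the neck around $S^3$. Because $S^*S^3$ carries no index-zero Reeb orbits, for sufficiently stretched data every holomorphic curve is pushed into $T^*S^3\setminus S^3$, where it is identified with a holomorphic curve in $X$ with boundary on the lift of $L_K$. The skein-valued open Gromov-Witten invariant is defined and invariant under this deformation by the framework of \cite{SOB}, provided that the multi-cover transversality needed in the presence of all covers of the annulus is supplied; this is precisely what \cite{bare} delivers. In the stretched limit the first tensor factor $\sk(S^3)$ becomes the coefficient ring, with the skein variable $a$ identified with the K\"ahler parameter $Q^{1/2}$, and after the prescribed substitution $z = q^{1/2} - q^{-1/2}$ the right-hand side becomes $\sum_\lambda P_{K,\lambda}(Q^{1/2},q)\cdot W_\lambda$, equal to $Z$.

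The main obstacle is the foundational content needed to carry out the neck stretching in the presence of the full tower of multiple covers of the annulus: both Theorem \ref{maintheorem} of this paper and the transversality results of \cite{bare}, neither of which was available in \cite{SOB}. A secondary but delicate point is ensuring that the signed framing monomial $\gamma^{|\lambda|}$ produced by Theorem \ref{maintheorem} reconciles correctly with the K\"ahler weight $Q^{|\lambda|}$ carried by the multiply covered annulus after the conifold transition and with the choice of 4-chain underlying $Z$, so that the specialization in the statement holds on the nose rather than up to a global normalization.
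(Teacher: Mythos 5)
Your proposal is correct and follows essentially the same route as the paper: the paper's own proof is a one-line combination of Theorem \ref{maintheorem} (applied via Lemma \ref{universal} to the one totally isolated annulus per link component), the conifold-transition and invariance results of \cite{SOB} (Theorems 6.6, 6.7, 7.3 and Corollary 7.4, where the ``appropriate 4-chain'' absorbs the framing monomial you rightly flag), and the perturbation scheme of \cite{bare} for multi-cover transversality and locality. You have simply unpacked that citation into the argument it encodes, with all the same ingredients in the same roles.
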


\section{Totally isolated annuli} \label{isolated}

Let us fix some terminology.  Recall that for the perturbation scheme in \cite{bare} we always take almost 
complex structures which are standard in some neighborhood of the Lagrangians, and moreover only perturb
the Cauchy-Riemann equation in the complement of some smaller neighborhood of the Lagrangians. 

\begin{definition} \label{model} 
Let $\bT$ be the solid torus, which we identify as $S^1 \times \R^2$. 
Consider its cotangent bundle, $T^* \bT = T^* S^1 \times T^* \R^2$.   
Inside $T^* \R^2$, consider the zero section $Z$ and the the conormal at zero, $N$. 
Inside $T^*S_1 = S^1 \times \R$, we write $S_\epsilon$ for the circle over 
$\epsilon \in \R$; these are all Lagrangian circles but only $S_0$ is exact.  

Consider $L_1 = Z \times S_0$ and $L_2 = N \times S_\epsilon$.  For a standard complex
structure, there will be a unique holomorphic annulus stretching between them, namely 
$S^1 \times [0, \epsilon] \times \mathbf{0} \subset S^1 \times \R \times T^* \R^2$.  
We term this the {\em model annulus}. 
\end{definition}

\begin{definition} \label{standard annuli} 
Suppose given some (possibly disconnected) Lagrangian $L$.  Recall from \cite{SOB, bare} that we always take complex structures
standard within some $\epsilon$ neighborhood of the Lagrangian.  We say a holomorphic annulus ending on $L$ 
is {\em standard} if it is contained within this $\epsilon$ neighborhood, and some neighborhood of the annulus
can be identified with the model annulus.  
\end{definition}

\begin{remark}
Note that a small deformation of any two Lagrangians that meet cleanly along a non null-homologous knot bounds a standard annulus after a small shift.
\end{remark}

\begin{definition} \label{totally isolated} 
We say  a standard annulus is {\em totally isolated} if in addition no other holomorphic
curves with boundary on $L$ enter its standard neighborhood (aside from multiple covers of the annulus). 
\end{definition}

Here we will restrict ourselves to the totally isolated case, but we expect more sophisticated arguments (such as
in \cite{Ionel-Parker-GV}) will allow
us to treat more general annuli.  The `totally isolated' restriction allows us to avoid discussing any properties of the perturbation
scheme, beyond locality.   In particular we have: 

\begin{lemma} \label{universal}
There is some element $\Psi \in \sk(\bT) \otimes \sk(\bT)$ such that the total contribution of all multiple covers of a 
totally isolated annulus to the Gromov-Witten invariant is always given by the cabling of its boundary by $\Psi$
(up to monomial change of framing variables). 
\end{lemma}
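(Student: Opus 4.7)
The plan is to use locality of the perturbation scheme to reduce the contribution of any totally isolated annulus to a single universal computation in the model of Definition \ref{model}.

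By Definition \ref{standard annuli}, a neighborhood of the standard annulus is biholomorphic, as an almost complex manifold equipped with a pair of Lagrangians, to a neighborhood of the model annulus in $T^*\bT$ with Lagrangians $L_1 = Z \times S_0$ and $L_2 = N \times S_\epsilon$. The totally isolated hypothesis (Definition \ref{totally isolated}) guarantees that only multiple covers of the annulus itself enter this neighborhood. Invoking locality of the perturbation scheme of \cite{bare}, the abstract perturbations needed to regularize these multiple covers can be chosen to be supported in the standard neighborhood and independent of perturbations used elsewhere in the ambient Calabi-Yau. Consequently, both the moduli spaces of perturbed multiple covers and their skein-theoretic contributions are determined entirely by the model.

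Working now in the model $T^*\bT$, the boundaries of perturbed multiple covers of the model annulus lie in the two Lagrangians $L_1$ and $L_2$, each of which is a solid torus canonically identified with $\bT$. Counting these boundaries in the skein produces a canonical element $\Psi \in \sk(\bT) \otimes \sk(\bT)$; the invariance statements of \cite{SOB, bare} imply that $\Psi$ is independent of the perturbation chosen. Transporting back to the ambient situation via the biholomorphism, neighborhoods of $K_1$ and $K_2$ inside the global Lagrangian are identified with solid torus neighborhoods of the core circles of $L_1$ and $L_2$, and pushing $\Psi$ forward through this identification is, by definition, the cabling of $K_1 \sqcup K_2$ by $\Psi$. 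The biholomorphism is unique up to a choice of framing for $K_1$ and $K_2$; a different choice rotates the longitude trivialization used in the cabling and alters the 4-chain correction entering the skein count, producing the same total contribution up to an overall signed monomial in the framing variables.

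The main obstacle lies not in the lemma itself, which given the definitions is nearly tautological, but in the foundations it rests on: one needs the perturbation scheme of \cite{bare} to be local in the strong sense used here and to provide genuine transversality for all multiple covers of a standard annulus. Once those inputs are taken as given, the remaining bookkeeping concerns only how a change of framing propagates through the 4-chain data into the final skein class, which is routine and yields exactly the claimed monomial ambiguity.
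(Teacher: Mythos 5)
Your proposal is correct and follows essentially the same route as the paper: both arguments rest on the locality of the perturbation scheme of \cite{bare}, which lets one transplant a fixed perturbation of the model annulus (and its multiple covers) to any occurrence of a totally isolated annulus, so that the contribution is the universal element $\Psi$ computed in the model, inserted by cabling. You simply spell out in more detail the identification with the model neighborhood and the framing ambiguity that the paper's two-sentence proof leaves implicit.
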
 
\begin{proof}
The perturbation scheme of \cite{bare} is local in the sense that we are free to independently perturb maps 
with disjoint images.  In particular we may always transplant some given perturbation of the model annulus 
and all its multiple covers to any occurence of a totally isolated annulus.  
\end{proof}

\section{Constraints from the cotangent bundle of the solid torus}

Here we study the model geometry of Definition \ref{model}.  We regard $T^* S^1 \times T^* \R^2$ 
as a Liouville manifold by taking the radial Liouville form on $T^* \R^2$, i.e., rounding corners in the $T^{\ast}\R^{2}$-factor.  We write $\Lambda_1$
and $\Lambda_2$ for the Legendrians at infinity of the Lagrangians $L_1, L_2$.  

We are interested in boundaries of moduli of 1-parameter families of curves asymptotic to 
chords on $\Lambda_1 \cup \Lambda_2$.  Boundaries coming from interior breaking we cancel
using the skein relations as in \cite{SOB, bare, unknot}; the remainder are the SFT breakings which correspond to two-level
curves
\cite{BEHWZ}.  These are pairs of: a rigid curve in the symplectization of the contact boundary, and a rigid curve in the
original manifold, with matching asymptotic Reeb chords.  We study such curves with a single positive puncture at Reeb chord connecting a component of $\Lambda$ to itself, and we will characterize which such curves may appear in the symplectization. 

For each Reeb chord of the Legendrian Hopf link, the Legendrian $\Lambda_1 \cup \Lambda_2$ has a Bott family, 
canonically parameterized by the zero section $S^1 \subset T^* S^1$. We resolve this degeneracy perturbing the contact form using 
a Morse function
on $S^1$ with one minimum.  
After perturbation,  
rigid holomorphic curves on $\Lambda_1 \cup \Lambda_2$ sit near this point, 
and understanding them reduces to the corresponding problem for 
the Legendrian Hopf link. More precisely:

\begin{lemma} \label{bott}
Minimal index Reeb chords for $\Lambda_1 \cup \Lambda_2$ 
are in natural bijection with minimal index chords for the Legendrian Hopf link.   
Rigid (up to translation) curves with one positive puncture are arbitrarily close to configurations of the following form: a curve of the Hopf link over the minimum in the $S^{1}$-family of self chords, and in case there are negative punctures, continue as flow lines to the minimum over trivial strips in the $S^{1}$-families of these.
\end{lemma}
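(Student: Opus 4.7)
The plan is to exploit the product structure $T^*S^1 \times T^*\R^2$ and apply a standard Morse--Bott perturbation argument. First I identify the Bott families of Reeb chords. Since $L_1 = Z \times S_0$ and $L_2 = N \times S_\epsilon$ factor through the product decomposition, and since $S_0$ and $S_\epsilon$ are both graphs over the zero section $S^1 \subset T^*S^1$, the $S^1$-coordinate is preserved under the Reeb flow near infinity (where, after rounding corners in the $T^*\R^2$ factor, the contact form is adapted to the product). Consequently, each minimal Reeb chord of the Legendrian Hopf link $Z \cup N \subset S^*\R^2$ lifts to an $S^1$-family of Reeb chords of $\Lambda_1 \cup \Lambda_2$, parameterized by the $S^1$ factor. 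Perturbing the contact form by a Morse function $f$ on $S^1$ with a single minimum resolves each $S^1$-Bott family into two chords whose Conley--Zehnder indices differ by one, via the standard formula $\mathrm{CZ} = \mathrm{CZ}_{\text{Hopf}} + \text{Morse index}$. The minimum-index chord pairs the Hopf link chord with the minimum of $f$, yielding the claimed natural bijection.

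Next I analyze rigid curves with one positive puncture. Before perturbation, the product structure splits any holomorphic curve in the symplectization: the projection to $\R \times S^*\R^2$ is a Hopf link curve asymptotic to Hopf link chords, while the projection to $\R \times S^*S^1$ is forced (by the asymptotic conditions and the product complex structure) to be a trivial strip at each puncture. Thus the moduli space is itself Bott, identified with (Hopf link moduli) $\times S^1$, where the $S^1$ records, say, the evaluation at the positive puncture. After the Morse--Bott perturbation, the standard cascade picture picks out rigid configurations: the positive puncture must land at a critical point of $f$, and matching the minimum-index chord forces this to be the minimum; at each negative puncture, the evaluation is joined to a critical point by a gradient trajectory of $f$, and matching with trivial strips over the minimum-index chord forces termination at the minimum. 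Since the Hopf link curve is rigid and the location of its positive puncture determines the $S^1$-position of the entire configuration, the picture in the lemma is forced.

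The main obstacle is the Morse--Bott analysis itself: one must verify that the cascade description exhausts all rigid configurations with the correct algebraic count, that transversality can be achieved for the Hopf link component of the problem, and crucially that the Morse perturbation of the contact form is compatible with the local perturbation scheme of \cite{bare} used in the skein-valued count, so that the transplantation underlying Lemma \ref{universal} applies without modification.
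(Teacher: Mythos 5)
Your proposal follows essentially the same route as the paper: the paper's proof likewise resolves the $S^1$-Bott families by a Morse function with one minimum and describes rigid curves as holomorphic curves glued to gradient flow lines into the minima, deferring the analytic details (gluing/cascade exhaustiveness and transversality) to the literature on Morse--Bott gluing and flow trees, exactly the points you flag as the remaining obstacle. Your explicit use of the product structure $T^*S^1\times T^*\R^2$ to split the moduli as (Hopf link moduli) $\times\, S^1$ is a correct elaboration of what the paper leaves implicit.
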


\begin{proof}
		When resolving the Bott degeneracy
		we must glue holomorphic curves and Morse flow lines to the minima in the $S^1$ Bott-families.  This is straightforward in the case under consideration, compare e.g., \cite{EENS} for similar gluing results. Another approach is to work with flow trees throughout, and refer to \cite{E} for the relation to holomorphic curves. This was carried out for Legendrian tori constructed from general Legendrian isotopies of Legendrian links in \cite[Theorem 1.1]{EK}. In that setting, the case under consideration here, corresponds to the trivial isotopy of the standard Legendrian Hopf link.
\end{proof}

We now recall the chords and curves for the Hopf link.  

\begin{figure}[h!]
	\centering
	\begin{tikzpicture}[scale=1]

		\tikzset{->-/.style={decoration={ markings,
					mark=at position #1 with {\arrow{>}}},postaction={decorate}}}
		
		\draw [blue, thick=1.5] (-1.5,1) to[in=90,out=190] (-2.3,0);
		\draw [blue, thick=1.5] (-1.5,1) to[in=135,out=10] (-0.3,0.3);         
		\draw [blue, thick=1.5]  (-1.5,-1)to[in=270,out=170] (-2.3,0);
		\draw [blue, thick=1.5] (-1.5,-1)to[in=225,out=350] (-0.3,-0.3)  ;
		
		\draw [blue, thick=1.5] (1.5,1) to[in=45,out=170] (0.3,0.3);         
		
		\draw [blue, thick=1.5]  (1.5,-1)to[in=270,out=10] (2.3,0);
		\draw [blue, thick=1.5] (1.5,1) to[in=140,out=350] (1.9,0.8);
		\draw [blue, thick=1.5] (2.05,0.65) to[in=90,out=320] (2.3,0);

		\draw [blue, thick=1.5] (1.5,-1)to[in=315,out=190] (0.3,-0.3)  ;
		
		\draw [blue, thick=1.5] (-0.3,0.3) to (0.3,-0.3);
		\draw [blue, thick=1.5] (-0.3,-0.3) to (-0.1,-0.1);
		\draw [blue, thick=1.5] (0.1,0.1) to (0.3,0.3);
		
		\draw [red, thick=1.5] (2.5,1) to[in=90,out=190] (1.7,0);
		
		\draw [red, thick=1.5] (2.5,-1) to[in=320,out=170] (2.1,-0.8);
		\draw [red, thick=1.5] (1.95,-0.65) to[in=270,out=140] (1.7,0);

		\draw [red, thick=1.5] (2.5,1) to[in=135,out=10] (3.7,0.3);         
		\draw [red, thick=1.5] (2.5,-1)to[in=225,out=350] (3.7,-0.3)  ;
		
		\draw [red, thick=1.5] (5.5,1) to[in=90,out=350] (6.3,0);
		\draw [red, thick=1.5] (5.5,1) to[in=45,out=170] (4.3,0.3);         
		\draw [red, thick=1.5]  (6.3,0)to[in=10,out=270] (5.5,-1);
		\draw [red, thick=1.5] (5.5,-1)to[in=315,out=190] (4.3,-0.3)  ;
		
		\draw [red, thick=1.5] (3.7,0.3) to (4.3,-0.3);
		\draw [red, thick=1.5] (3.7,-0.3) to (3.9,-0.1);
		\draw [red, thick=1.5] (4.1,0.1) to (4.3,0.3);
		
		\node at (0,0.3) {\footnotesize{$c_1$}}; 
		\node at (2,1.2) {\footnotesize{$m_{12}$}}; 
		\node at (2,-1.2) {\footnotesize{$m_{21}$}}; 
		\node at (4,0.3) {\footnotesize{$c_2$}}; 
		
	\end{tikzpicture}
	\caption{The Lagrangian projection of the Legendrian Hopf link}
	\label{hopf}
\end{figure}


\begin{lemma}\label{1dimHopfdisk}
The rigid curves in the symplectization with one puncture at $c_1$ are: 
no-negative-puncture disks $D_1$ going to the left and $D_1'$ going to the right in Figure \ref{hopf}; 
and there is a two-negative-puncture disk $T_1$ which follows along $D_1'$ until arriving at $m_{12}$,
then changes to the other component, travels the short arc to $m_{21}$, then returns to again follow
$D_1'$. 

Similarly, for $c_2$ there is $D_2$ going to the right, $D_2'$ going to the left, and a two-negative-puncture
$T_2$ which again changes components at $m_{12}$ and $m_{21}$. 
\end{lemma}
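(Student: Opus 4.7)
The plan is to analyze holomorphic disks through the Lagrangian projection of the Legendrian Hopf link shown in Figure \ref{hopf}, under which (with a standard almost complex structure) rigid disks in the symplectization correspond to immersed polygonal regions with one positive corner at $c_1$ and some number of negative corners at the other Reeb chords. All four chords $c_1, c_2, m_{12}, m_{21}$ are minimal index Reeb chords of a common grading, and the formal dimension formula for disks in a $4$-dimensional symplectization then forces a rigid (dimension zero modulo $\R$-translation) disk with positive puncture at $c_1$ to have either $0$ or $2$ negative punctures, each at another minimal index chord.

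First I would treat the case of zero negative punctures. Such a disk must be an immersed disk whose boundary lies entirely in the first (self-crossing) component and has a single convex corner at $c_1$. Inspecting Figure \ref{hopf}, the only bounded regions of the plane incident to $c_1$ whose boundary lies entirely on one component are precisely the two small bigons sitting to the left and to the right of the crossing; these are $D_1$ and $D_1'$. Any region covering more area of the projection picks up additional corners at $m_{12}$, $m_{21}$, or $c_2$, so these exhaust the no-negative-puncture possibilities.

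For two negative punctures, the negative corners must lie at chords distinct from $c_1$, and by component bookkeeping — the disk boundary must alternate arcs on the two components between corners at mixed chords — the two negative corners cannot both be at self-chords, hence must be one at $m_{12}$ and one at $m_{21}$. Tracing out the boundary: starting from the positive corner at $c_1$, one follows a branch of $D_1'$, arrives at $m_{12}$ as a negative corner, jumps onto the second component, travels the short arc between $m_{12}$ and $m_{21}$, jumps back at $m_{21}$, and completes along the other branch of $D_1'$. Checking the convexity/concavity of each corner against the required signs forces uniqueness of this configuration, yielding $T_1$; the analogous analysis with the roles of the two components exchanged produces $D_2, D_2', T_2$ with positive puncture at $c_2$.

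The main obstacle will be confirming that no exotic immersed polygon exists — in particular ruling out disks that wrap nontrivially around the projection or sweep out multiple regions of the complement — and this is handled by an area/Stokes argument combined with tracking orientations and corner signs along the candidate boundary. Because the Hopf link projection is simple and the chords involved are standard, this reduces to a Chekanov-style combinatorial enumeration of the kind underpinning the DGA computations in \cite{EENS, EK}.
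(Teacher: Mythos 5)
The paper itself offers no proof of this lemma: it is introduced with ``We now recall the chords and curves for the Hopf link'' and treated as the standard, known computation of rigid disks for the Legendrian Hopf link (the same enumeration that underlies its Chekanov--Eliashberg DGA, as in the references \cite{EENS, ENg}). So your proposal is supplying an argument where the paper supplies none, and the route you choose --- the correspondence between rigid disks in the symplectization and immersed polygons in the Lagrangian projection with one positive convex corner, followed by a combinatorial enumeration --- is exactly the standard way this statement is established. In that sense the approach is right.

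Two points need repair, however. First, your claim that ``the formal dimension formula \dots forces a rigid disk with positive puncture at $c_1$ to have either $0$ or $2$ negative punctures'' is not correct as stated. With the standard gradings $|c_1|=|c_2|=1$ and $|m_{12}|=|m_{21}|=0$, the index condition for a rigid disk is that the negative punctures have gradings summing to $0$; this excludes a negative corner at $c_2$ (degree reasons), but it permits \emph{any even number} of mixed corners --- e.g.\ the word $m_{12}m_{21}m_{12}m_{21}$ is index-allowed --- and it does not by itself exclude a \emph{single} mixed negative corner. The odd case is excluded by the component-parity argument you invoke later (the boundary must start and end on component $1$), and the four-or-more-corner words are excluded only by the actual geometric enumeration, not by index. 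You should also actually verify the gradings rather than assert ``a common grading,'' since $c_i$ and $m_{ij}$ do not have the same degree. Second, the exhaustiveness step --- ``confirming that no exotic immersed polygon exists'' --- is precisely the content of the lemma, and you defer it to an unspecified area/Stokes argument. For this particular diagram the check is short (trace the boundary out of each of the two quadrants at $c_1$ not occupied by $D_1$ and $D_1'$ and observe that every continuation either closes up as $T_1$ or is forced into a corner of the wrong convexity/sign), but it should be carried out, since it is the only nontrivial assertion being proved.
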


The torus $\Lambda_1$ comes with 4 points where it meets the Reeb chords; similarly for $\Lambda_2$.  
We fix a capping path for the self-chord, but do not fix any for the mixed chord.  

\begin{lemma}
We use capping paths given by the boundaries $\partial D_1', \partial D_2'$. 
With this choice, $\partial D_1, \partial D_2$ become the meridians of their
respective tori, 
$\partial D_1', \partial D_2'$ become trivial, and $\partial T_1$ and $\partial T_2$ are isotopic (rel boundary). 
\end{lemma}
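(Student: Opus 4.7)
The triviality of $\partial D_1'$ and $\partial D_2'$ after capping is by design, since these are the chosen capping paths. For the meridian claim, I would glue $D_1$ and $D_1'$ along their common positive puncture at $c_1$ (with compatible orientations at the strip-like end) to produce a topological disk $\Sigma_1$ mapped into $T^{\ast}S^1 \times T^{\ast}\R^2$ with boundary on $L_1 = \R^2 \times S^1$. Its boundary loop on $\Lambda_1$ is precisely the capped representative of $\partial D_1$. By Lemma \ref{bott}, $\Sigma_1$ is modeled near the minimum of the Morse function by the Lagrangian disk $\R^2 \subset T^{\ast}\R^2$, which fills one component of the Legendrian Hopf link at infinity in $S^3$; its boundary there is the great circle $S^1 \subset S^3$. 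Under the identification $\Lambda_1 \cong S^1 \times S^1$, with the first factor coming from the $\R^2$ at infinity and the second from the base $S^1$, this circle is precisely the meridian $S^1 \times \{\text{pt}\}$. The argument for $\partial D_2$ is identical.

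For the final assertion, I plan to compute $\ol{\partial T_1}$ and $\ol{\partial T_2}$ as 1-chains on $\Lambda_1 \cup \Lambda_2$ whose boundary consists of the four mixed chord endpoints, then show they agree up to isotopy. Decompose $\partial T_1 = \alpha_1 \cdot \beta \cdot \alpha_2$, where $\alpha_1, \alpha_2 \subset \Lambda_1$ are the subarcs of $\partial D_1'$ from the $c_1$-endpoints to the $\Lambda_1$-ends of $m_{12}$ and $m_{21}$, and $\beta \subset \Lambda_2$ is the short arc joining the $\Lambda_2$-ends of the mixed chords; similarly $\partial D_1' = \alpha_1 \cdot \gamma_1 \cdot \alpha_2$, for a middle subarc $\gamma_1 \subset \Lambda_1$. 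Substituting into $\ol{\partial T_1} = \partial T_1 \cdot (\partial D_1')^{-1}$ and cancelling the $\alpha_i$, the result reduces (up to a basepoint shift through $\alpha_1$) to the ``bigon'' loop on $\Lambda_1 \cup \Lambda_2$ consisting of $\beta$ on $\Lambda_2$ and $\gamma_1^{-1}$ on $\Lambda_1$, joined through $m_{12}$ and $m_{21}$. The symmetric computation for $T_2$ yields the bigon consisting of the short arc $\beta' \subset \Lambda_1$ and $\gamma_2^{-1}$, the middle subarc of $\partial D_2'$ on $\Lambda_2$.

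The main technical point is to identify these two bigons as isotopic rel the mixed chord endpoints. I expect this to reduce to the geometric observation that near the crossings $m_{12}, m_{21}$, the short arc $\beta$ and the middle subarc $\gamma_2$ describe the same path on $\Lambda_2$ (both run between the two $\Lambda_2$-ends of the mixed chords through the narrow region between the two Legendrians), and symmetrically $\beta' = \gamma_1$ on $\Lambda_1$. This is a direct check on the Lagrangian projection of the standard Legendrian Hopf link shown in Figure \ref{hopf}. Once these identifications are in hand, the two bigons coincide as 1-chains on $\Lambda_1 \cup \Lambda_2$ (with ends at the mixed chord endpoints), and the desired isotopy rel boundary is immediate.
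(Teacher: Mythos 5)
Your argument is correct and is essentially the paper's own proof, which simply records this lemma as ``Clear from Lemmas \ref{bott} and \ref{1dimHopfdisk}'': the capped $\partial D_i$ traverses the full unknotted component at infinity of $T^*\R^2$, hence is the meridian $S^1_\infty\times\{\mathrm{pt}\}$, and after cancelling the common subarcs of $\partial D_i'$ both $\partial T_1$ and $\partial T_2$ reduce to the same bigon formed by the two inner arcs of the Hopf link between $m_{12}$ and $m_{21}$, exactly as you describe from Figure \ref{hopf}. The only cosmetic caveat is that ``gluing $D_1$ and $D_1'$ at their common positive puncture'' is not an SFT gluing (both punctures are positive); what you actually use, and all that is needed, is the topological concatenation $\partial D_1\cdot(\partial D_1')^{-1}$ of the two boundary arcs.
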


\begin{proof}
	Clear from Lemmas \ref{bott} and \ref{1dimHopfdisk}.
\end{proof}

\begin{proposition} \label{torus recursion} 
The count $\Psi$ of all bounded curves in the interior is annihilated by 
$$(P_{1,0} - \bigcirc) \otimes a_2 - a_1 \otimes (P_{1,0} - \bigcirc)$$
\end{proposition}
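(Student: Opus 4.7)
The plan is to derive the annihilation relation from two SFT identities, one obtained by studying one-parameter moduli of interior curves with a single positive puncture at $c_1$ and the other with a single positive puncture at $c_2$, and then combining them to eliminate the unknown contributions from the mixed-chord disks $T_1, T_2$.

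To set up the SFT identities, I fix capping paths as in the preceding lemma: $c_i$ is capped by $\partial D_i'$, and the mixed chords $m_{12}, m_{21}$ are left uncapped. For each $i\in\{1,2\}$, I consider the moduli $\Mm_i$ of interior curves with one positive puncture at $c_i$, in a generic one-parameter setup, and study $\partial\Mm_i$. Interior nodal degenerations are killed by the skein relation as in \cite{SOB, bare, unknot}, and the remaining boundary consists of SFT breakings at the positive puncture. By Lemma \ref{1dimHopfdisk}, the rigid symplectization curves available are $D_i$, $D_i'$ and $T_i$. The first two glue with a copy of $\Psi$; the preceding lemma identifies $\partial D_i\cup\partial D_i'$ as the meridian of the $i$-th torus, so $D_i$ contributes the skein operator $P_{1,0}$ acting on the $i$-th factor of $\Psi$, while $\partial D_i'\cup\partial D_i'$ is trivial, so $D_i'$ contributes the scalar $\bigcirc$. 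The curve $T_i$ glues with an interior curve carrying positive punctures at $m_{12},m_{21}$ to produce a mixed contribution $M_i\in\sk(\bT)\otimes\sk(\bT)$, and vanishing of $\partial\Mm_i$ in the skein reads
\begin{equation*}
\bigl((P_{1,0}-\bigcirc)\otimes 1\bigr)\Psi=M_1,\qquad \bigl(1\otimes(P_{1,0}-\bigcirc)\bigr)\Psi=M_2,
\end{equation*}
with signs fixed appropriately.

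Next, I eliminate $M_1$ and $M_2$ using the isotopy $\partial T_1\sim\partial T_2$ rel boundary from the preceding lemma. Concretely, I claim $(1\otimes a_2)M_1=(a_1\otimes 1)M_2$: cabling $M_1$ by the longitude $a_2$ on the second torus compensates exactly for the fact that $\partial T_1$ has two long arcs on $\Lambda_1$ and one short arc on $\Lambda_2$, while cabling $M_2$ by $a_1$ on the first torus compensates symmetrically, and the isotopy $\partial T_1\sim\partial T_2$ rel boundary ensures that after these cablings the two configurations become isotopic in the framed skein. Multiplying the two identities above by $1\otimes a_2$ and $a_1\otimes 1$ respectively and subtracting yields
\begin{equation*}
\bigl((P_{1,0}-\bigcirc)\otimes a_2-a_1\otimes(P_{1,0}-\bigcirc)\bigr)\Psi=0,
\end{equation*}
as claimed.

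The main obstacle is the identification $(1\otimes a_2)M_1=(a_1\otimes 1)M_2$. It requires careful bookkeeping of framings, orientations, and SFT sign conventions, to confirm that the extra longitude cabling exactly accounts for the asymmetry between the long and short boundary arcs of $T_i$ on $\Lambda_j$ and the associated framing shift. Geometrically, however, it is a direct consequence of the isotopy of $\partial T_1$ and $\partial T_2$ established by the preceding lemma.
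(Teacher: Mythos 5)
Your setup matches the paper's: two one-dimensional moduli spaces $\Mm(c_1)$, $\Mm(c_2)$, interior degenerations cancelled by the skein relation, SFT breakings contributing either $D_i, D_i'$ glued to $\Psi$ or $T_i$ glued to the two-positive-puncture count $\Psi''$, and elimination of the mixed term using $\partial T_1 \sim \partial T_2$. The gap is in the step you yourself flag as ``the main obstacle'': the identity $(1\otimes a_2)M_1=(a_1\otimes 1)M_2$ is asserted, not proven, and the heuristic offered for it does not establish it. First, $a_i$ is the framing variable of the $i$-th skein --- a scalar coefficient arising from kinks and 4-chain intersections --- not a cable by the longitude of the solid torus, so ``cabling $M_1$ by the longitude $a_2$'' is not a meaningful operation here. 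Second, the isotopy $\partial T_1\sim\partial T_2$ rel boundary only tells you that, after gluing the same $\Psi''$, the two mixed contributions agree up to \emph{some} signed monomial in the framing variables; counting long and short arcs on $\Lambda_1$ versus $\Lambda_2$ does not determine which monomial, and the paper explicitly remarks that $(\partial T_1)\circ''\Psi''$ is a rather nontrivial quantity. You also silently normalize the $D_i, D_i'$ contributions to the clean combination $P_{1,0}-\bigcirc$, whereas each disk a priori carries its own unknown framing monomial.

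The paper closes exactly this gap without any 4-chain bookkeeping: it writes the post-cancellation relation as $((\gamma_1 P_{1,0}+\gamma_1'\bigcirc)\otimes 1 + 1\otimes(\gamma_2 P_{1,0}+\gamma_2'\bigcirc))\Psi=0$ with all four framing factors unknown, notes that the operator preserves the grading by number of boxes, and then evaluates on the known low-order terms $\Psi = 1 + \gamma\, W_\square\otimes W_\square+\cdots$. The degree-zero term forces $\gamma_1=-\gamma_1'$ and $\gamma_2=-\gamma_2'$, and the degree-one term together with the eigenvalue equation (\ref{eigenvalue}) forces $a_2\gamma_1+a_1\gamma_2=0$, which is precisely where the coefficients $a_1, a_2$ in the operator come from. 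To repair your argument you would either need to carry out the framing and 4-chain computation you defer, or adopt this algebraic determination of the unknown monomials.
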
 

\begin{proof}
Let us write $\mathcal{M}(c_1)$ and $\mathcal{M}(c_2)$ for the moduli spaces of holomorphic curves in 
the cotangent bundle of the solid torus with boundaries on $L_1 \cup L_2$ and with one positive
puncture, asymptotic, respectively, to $c_1$ or $c_2$.  These moduli spaces are one
dimensional.  They have boundaries of two kinds, coming from boundary degenerations in the interior,
and SFT degenerations at infinity.  Degenerations of the first kind are cancelled by working
in the skein (and appropriately weighting curves by Euler characteristic of the domain and 4-chain intersections as 
explained in \cite{SOB}).  The sum of all SFT degenerations must therefore vanish when evaluated in the
skein.  Such degenerations are two-level rigid curves.  There are two possibilities: either 
a disk $D_i$ or $D_i'$ at infinity, plus bounded curves in the interior; or  $T_1$ or $T_2$, plus some curves in the interior with two positive punctures asymptotic 
to $m_{12}$ and $m_{21}$.  

Let $\Psi$ denote the count of all bounded curves in the interior.  It takes value in 
$\sk^+(L_1 \cup L_2)$.  We write $L_1''$ and $L_2''$ for these Lagrangians with the two
points where the mixed Reeb chords enter and leave.  We write $\Psi'' \in \sk(L_1'' \cup L_2'')$ for the 
count of all holomorphic curves with two punctures, asymptotic to these two chords.  

We obtain two equations in $\sk(L_1 \cup L_2)$ from
the boundaries of $\mathcal{M}(c_1)$ and $\mathcal{M}(c_2)$.  
Up to unknown framing factors (signed monomials in framing variables on every term) these are: 
$$(\partial D_1 + \partial D_1') \circ \Psi + (\partial T_1) \circ'' \Psi'' = 0 $$
$$(\partial D_2 + \partial D_2') \circ \Psi + (\partial T_2) \circ'' \Psi'' = 0 $$

Here the $\circ''$ is  gluing of a $T^2 \times [0,1]$ with two
marked points on $T^2 \times 1$ to a solid torus with two marked points on the outside. 
We have an isotopy $\partial T_1 \sim \partial T_2$, so subtracting (multiples by some framing factor)
we may cancel the $\Psi''$ term and obtain: 
$$((\gamma_1 P_{1,0} + \gamma_1' \bigcirc) \otimes 1 + 1 \otimes (\gamma_2 P_{1,0} + \gamma_2' \bigcirc))\Psi = 0$$

Here the $\gamma_i$ are framing factors we now write explicitly. 
This operator preserves the natural grading by number of boxes in each factor of $\sk^+(\bT)$. 
We determine the framing factors from the first terms of 
$\Psi = 1 + \gamma \cdot W_\square \otimes W_\square + \cdots$.  From the zeroeth term, we see that 
$\gamma_1 = -\gamma_1'$ and $\gamma_2 = -\gamma_2'$.  Now we study the first term: 

$$\bigg(\gamma_1(P_{1,0} - \bigcirc) \otimes 1 + 1 \otimes \gamma_2 (P_{1,0} - \bigcirc) \bigg)  (W_\square \otimes W_\square) = 0$$

Using Equation (\ref{eigenvalue}), we see $a_2 \gamma_1 + a_1 \gamma_2 = 0$.  
\end{proof}  

\begin{corollary} \label{diagonal}
$\Psi = \sum_\lambda n_\lambda \cdot W_\lambda \otimes W_\lambda$, for some $n_\lambda(a_1, a_2, q)$. 
\end{corollary}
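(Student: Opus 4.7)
The plan is to expand $\Psi$ in the eigenbasis $\{W_\lambda \otimes W_\mu\}$ and read off which terms can survive the annihilation by the operator from Proposition \ref{torus recursion}. Since the perturbations of multiple covers of the model annulus all wind positively along the longitude in each factor, the class $\Psi$ lies in $\sk^+(\bT) \otimes \sk^+(\bT)$, the positive part, where the $W_\lambda$ form a basis. Thus I may write
\begin{equation*}
\Psi \;=\; \sum_{\lambda,\mu} n_{\lambda,\mu}(a_1,a_2,q)\, W_\lambda \otimes W_\mu.
\end{equation*}

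Next I apply the operator of Proposition \ref{torus recursion} termwise. Using Equation (\ref{eigenvalue}) in each factor,
\begin{equation*}
(P_{1,0} - \bigcirc)\, W_\lambda \;=\; a\,(q^{1/2} - q^{-1/2})\, c_\lambda(q)\, W_\lambda,
\end{equation*}
so the operator $(P_{1,0} - \bigcirc) \otimes a_2 - a_1 \otimes (P_{1,0} - \bigcirc)$ acts on $W_\lambda \otimes W_\mu$ as multiplication by
\begin{equation*}
a_1 a_2\, (q^{1/2} - q^{-1/2}) \bigl( c_\lambda(q) - c_\mu(q) \bigr).
\end{equation*}
Setting this linear combination equal to zero, linear independence of the $W_\lambda \otimes W_\mu$ forces $n_{\lambda,\mu}\bigl(c_\lambda(q) - c_\mu(q)\bigr) = 0$ for every pair $(\lambda,\mu)$.

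Finally I invoke the property of the content polynomial recalled after Equation (\ref{eigenvalue}): the content polynomial $c_\lambda(q)$ determines the partition $\lambda$, hence $c_\lambda(q) = c_\mu(q)$ if and only if $\lambda = \mu$. Consequently $n_{\lambda,\mu} = 0$ whenever $\lambda \neq \mu$, and setting $n_\lambda := n_{\lambda,\lambda}$ yields the claimed diagonal form $\Psi = \sum_\lambda n_\lambda \cdot W_\lambda \otimes W_\lambda$.

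The only point requiring care is the justification that $\Psi$ indeed lies in $\sk^+(\bT) \otimes \sk^+(\bT)$ so that the $W_\lambda \otimes W_\mu$ basis is available; this follows from Lemma \ref{universal} together with the observation that every (perturbed) multiple cover of the model annulus meets each boundary torus in a curve that is homologous to a positive multiple of the longitude. No further obstacle remains, as the argument is purely an application of the eigenvalue formula together with injectivity of the content polynomial on partitions.
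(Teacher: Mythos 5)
Your proof is correct and follows essentially the same route as the paper: expand $\Psi$ in the basis $W_\lambda \otimes W_\mu$ of $\sk^+(\bT)\otimes\sk^+(\bT)$, apply the annihilating operator using the eigenvalue formula (\ref{eigenvalue}), and conclude from the fact that $c_\lambda(q)$ determines $\lambda$. Your added remark justifying positivity of the boundary classes matches the paper's own observation (made in the introduction) that the boundaries are small perturbations of positive multiples of the longitude.
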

\begin{proof}
Expand $\Psi = \sum_{\lambda, \mu} n_{\lambda, \mu}(a_1, a_2, q) \cdot W_\lambda \otimes W_\mu$.  
Applying the operator of Proposition \ref{torus recursion} and using Equation (\ref{eigenvalue}), 
we find
$$ 0 =  a_1 a_2 (q^{1/2} - q^{-1/2}) \sum_{\lambda, \mu}  n_{\lambda, \mu}(a_1, a_2, q) \cdot (c_\lambda(q) - c_\mu(q)) \cdot
W_\lambda \otimes W_\mu$$
Since $c_\lambda(q) - c_\mu(q)$ vanishes only for $\lambda = \mu$, the $n_{\lambda, \mu}$ must
vanish whenever $\lambda \ne \mu$. 
\end{proof} 

\begin{remark}
Given the value of $\Psi$ we ultimately compute, in fact
$(\partial T_1) \circ'' \Psi''$ must be a rather nontrivial quantity.  
\end{remark}

\section{Proof of Theorems \ref{maintheorem} and \ref{homflypt counts}} 

\begin{proof}[Proof of Theorem \ref{maintheorem}.]
We must show that the  coefficients $n_\lambda(a_1, a_2, q)$ from Corollary \ref{diagonal} are in fact all 
just $\gamma^{|\lambda|}$, where $\gamma$ is some signed monomial in the framing variables $a_1, a_2$.  
To do so we study in $T^*S^3$ the union of the zero section and the conormal to an unknot, shifted off
as in \cite{SOB}.  Let us take the zero section as $L_1$ and the conormal as $L_2$. 

We write $\langle \cdot \rangle_{S^3}$ for the isomorphism from $\sk(S^3)$ to an appropriately localized
polynomial ring in $a_1, q$.  By Corollary \ref{diagonal} and Lemma \ref{universal}, the total count
of bounded curves is: 
$$\sum_\lambda n_\lambda(a_1, a_2, q) \langle W_\lambda \rangle_{S^3} \cdot W_\lambda$$
On the other hand, we computed this total count in \cite{unknot}, and showed it was one of the following two things, 
depending on the orientation of $S^3$: 
$$\sum_\lambda \gamma^{|\lambda|} \langle W_\lambda \rangle_{S^3} \cdot W_\lambda \qquad \qquad 
\sum_\lambda \gamma^{|\lambda|} \langle W_{\lambda'} \rangle_{S^3} \cdot W_\lambda $$

Here $\lambda'$ is the conjugate partition to $\lambda$.  In \cite{unknot} these results appeared via the formulae:
\begin{equation} \label{dimension} \langle W_\lambda \rangle_{S^3} = \prod_{\square \in \lambda} \frac{a q^{c(\square)/2} - a^{-1} q^{-c(\square)/2}}{q^{h(\square)/2} - q^{-h(\square)/2}}
\qquad \qquad \langle W_{\lambda'} \rangle_{S^3} = \prod_{\square \in \lambda} \frac{a q^{-c(\square)/2} - a^{-1} q^{c(\square)/2}}{q^{h(\square)/2} - q^{-h(\square)/2}}
\end{equation}

Thus we see that either $n_\lambda(a_1, a_2, q) = \gamma^{|\lambda|}$ or 
$n_\lambda(a_1, a_2, q) = \gamma^{|\lambda|}  \langle W_{\lambda'} \rangle_{S^3}  / \langle W_{\lambda} \rangle_{S^3}$. 
In the second case, we would have $n_\lambda(a_1, a_2, q) = a_2^m f_\lambda(a_1, q)$, where $f_\lambda$ is not
a monomial.  
But we also could have closed off $L_2$ to an $S^3$, leaving $L_1$ alone.   From this we would see 
either $n_\lambda(a_1, a_2, q) = \gamma^{|\lambda|}$ or that $n_\lambda(a_1, a_2, q) = a_1^{m'} g_\lambda(a_2, q)$, 
where $g_\lambda$ is not a monomial.
The only consistent possibility is $n_\lambda(a_1, a_2, q) = \gamma^{|\lambda|}$.
\end{proof}  

\begin{remark}
In \cite{unknot}, we found two possibilities because we used only the recursion relation
coming from the knot conormal, which cannot know the orientation of $S^3$.  Here however, both components
of the Lagrangian are oriented: we orient their longitudes along the direction traversed 
by the boundary of the annulus, and their meridians along $\partial D_1$ or $\partial D_2$.  Having already oriented
both components, there is no choice remaining when we close one off into a sphere.  In principle one could
follow carefully the 4-chain conventions to see which possibility from \cite{unknot} arises; but in the above argument
we found a trick to avoid doing so. 
\end{remark}

\begin{proof}[Proof of Theorem \ref{homflypt counts}]
This follows immediately from combining Theorem \ref{maintheorem} (via Lemma \ref{universal}) with
our previous results \cite[Theorem 6.6, 6.7, 7.3; Corollary 7.4]{SOB}, where the ``appropriate choice of 4-chain'' is given. 
To appeal to the results of \cite{SOB} in the present context, 
where multiple covers are not excluded for topological reasons, requires an adequate perturbation
scheme.  The perturbations must also have the locality properties demanded in Lemma \ref{universal}. 
We construct  such a scheme in \cite{bare}.  
\end{proof}  

\newpage

\bibliographystyle{hplain}
\bibliography{skeinrefs}

\end{document}